\definecolor{darkblue}{rgb}{0.0,0.0,0.3}
\theoremstyle{plain}
\newtheorem{theorem}{Theorem}[section]
\newtheorem*{theorem*}{Theorem}
\newtheorem{lemma}[theorem]{Lemma}
\newtheorem{proposition}[theorem]{Proposition}
\newtheorem*{proposition*}{Proposition}
\newtheorem*{corollary*}{Corollary}
\theoremstyle{definition}
\newtheorem{remark}[theorem]{Remark}
\numberwithin{equation}{section}
\title[Integer Sets of Large Harmonic Sum]{Integer Sets of Large Harmonic Sum which Avoid Long Arithmetic Progressions}
\author[Walker]{Alexander Walker \\ \today}
\begin{document}

\begin{abstract}
We give conditions under which certain digit-restricted integer sets avoid $k$-term arithmetic progressions. These sets and their harmonic sums can be computed efficiently. Through large-scale search, we identify integer sets avoiding arithmetic progressions of length $4$ and $10$ whose harmonic sums exceed earlier ``greedy'' constructions.
\end{abstract}

\maketitle

The Erd\H{o}s--Tur\'an conjecture on arithmetic progressions proposes that integer sets with divergent harmonic sums (so-called \emph{large} sets) must contain arithmetic progressions of arbitrary (finite) length. This conjecture is known to hold for integer sets of positive density (Szemeredi's theorem,~\cite{Szemeredi75}) and for the set of primes (the Green--Tao theorem,~\cite{GreenTao08}).

Let $\mathcal{S}_k$ denote the collection of sets of positive integers which avoid arithmetic progressions of length $k$.  Such sets will be called \emph{$k$-free} hereafter. The Erd\H{o}s--Tur\'an conjecture implies that the harmonic sum of any member of $\mathcal{S}_k$ is bounded. In fact, under Erd\H{o}s--Tur\'an, these harmonic sums would be \emph{uniformly} bounded as a function of $k$, due to a result of Gerver~\cite{Gerver77}.  Let
\[M_k := \sup_{T\in \mathcal{S}_k} \sum_{t \in T} 1/t,\]
which is finite for each $k$ if and only if the Erd\H{o}s--Tur\'an conjecture holds. Recent work of Bloom--Sisask~\cite{BloomSisask19} shows that $M_3$ is finite, but the finiteness of $M_k$ is otherwise unknown.

If finite, the growth rate of $M_k$ represents a refinement to the Erd\H{o}s--Tur\'an conjecture.  For this reason, lower bounds for $M_k$ appear in several works.  Berlekamp gave a construction in~\cite{Berlekamp68} which proved $M_k \geq \frac{1}{2} k \log 2$. This was improved in~\cite{Gerver77}, which established $M_k > (1-o(1))k \log k$.

Numerical lower bounds for $M_k$ for small $k$ have also received attention.  The current record for $M_3$ is held by Wr\'oblewski, who proved $M_3 \geq 3.00849$ by interlacing a greedy $3$-free set with a denser $3$-free packing due to Behrend~\cite{Behrend46, Wroblewski84}.

Let $G_k$ denote the lexicographically earliest $k$-free set.  The sets $G_k$ have reasonably large harmonic sums, especially when $k$ is prime and $G_k$ exhibits fractal self-similarity.  When $k$ is composite, the harmonic sums are less impressive. Heuristics from~\cite{GerverRamsey79} suggest that $G_4$ and $G_6$ have harmonic sums of $\approx 4.3$ and $\approx 6.9$, respectively.  Notably, the harmonic sum of $G_6$ is predicted to be \emph{less than} that of $G_5$, which is approximately $7.866$.

This article provides a new construction for infinite $k$-free sets. Fix an integer $b \geq 2$ and a proper subset of integers $S \subsetneq [0,b-1]$.  We define the \emph{Kempner set} $\mathcal{K}(S,b)$ as the set of non-negative integers whose base-$b$ digits are contained in $S$.  Kempner sets first appeared in~\cite{Kempner14}, and their arithmetic properties have been studied in~\cite{EMS99a}, \cite{EMS99b}, and~\cite{Maynard19}. The connection between Kempner sets and arithmetic progressions was first developed in~\cite{WalkerWalker20}.  In particular,~\cite{WalkerWalker20} proved that every Kempner set is $k$-free for some $k$.

Kempner sets are useful in the experimental study of $M_k$ because the lengths of their longest arithmetic progressions are easy to compute and their harmonic sums are computable in polynomial time (due to an algorithm of Baillie--Schmelzer in~\cite{BaillieSchmelzer08}).  Most importantly, they are also capable of producing large harmonic sums.  For example, the $4$-free set
\[\mathcal{K}(\{0,1,2,4,5,7\} ,11) +1 = \{1,2,3,5,6,8,12,13,14,16,17,19,23,24,\ldots\}\]
has harmonic sum $4.421746$.  This simple set exceeds the estimated harmonic sum of $G_4$ by a considerable margin and already sets a new lower bound for the supremum $M_4$.

We describe and implement algorithms which use Kempner sets to search for lower bounds for $M_k$.  Even with pruning, this search is time-consuming: the number of Kempner sets $\mathcal{K}(S,b)$ grows exponentially in $b$ and $b$ must be taken large before interesting results are found.
Our search is most successful in the case $k=4$, where our best result is the set
\[\mathcal{K}(\{0,1,2,4,5,9,10,11,14,16,17,18,21,24,30,37,39,41,42,45,47\},55) +1.\]
This set has harmonic sum $4.43975$ and sets a new lower bound for $M_4$. We also improve the lower bound for $M_{10}$: the set 
\begin{align*}
& \mathcal{K}(\{0, 1, 2, 3, 4, 5, 6, 7, 8, 10, 11, 12, 13, 14, 15, 17, 18, 19, 20, 21, \\
& \qquad 22, 24, 25, 26, 27, 28, 29, 31, 32, 33, 34, 35, 36, 38, 39, 40, 42, \\
& \qquad 43, 45, 46, 47, 48, 49, 50, 52, 53, 55, 56, 60, 61, 62, 68, 69, 71, 73\}, 77) + 1
\end{align*}
is $10$-free and has harmonic sum $14.056$, which improves the lower bound $M_{10} \geq 13.5905$ derived from $(G_7+3) \cup \{1,2,3\}$.

\section*{Acknowledgments}

This work was supported by the Additional Funding Programme for Mathematical Sciences, delivered by EPSRC (EP/V521917/1) and the Heilbronn Institute for Mathematical Research.

\section{Modular Arithmetic Progressions}

A set $S \subset [0,b-1]$ is called an \emph{arithmetic progression mod $b$} of length $k$ if there exists an arithmetic progression $A$ (in the ordinary sense) of length $k$ and common difference $\Delta$ for which $A \!\! \mod b$  lies in $S$ and $b \nmid \Delta$.  By extension, a set $S \subset [0,b-1]$ is called \emph{$k$-free mod $b$} if it contains no arithmetic progressions mod $b$ of length $k$.  Note that we do not require $\gcd(\Delta,b)=1$. This has some counter-intuitive implications; for example, it implies that $\{1,3,5\}$ is an arithmetic progression mod $6$ of infinite length.

One can test if a set $S$ is $k$-free mod $b$ by testing if the associated union of translates $\bigcup_{j=0}^{n-1} (S+jb)$ has no increasing $k$-term arithmetic progression with common difference less than $b$.  In the context of a depth-first search, one can create $k$-free sets mod $b$ by extending smaller $k$-free sets.

Alternatively, one can construct sets which are $k$-free mod $b$ by first fixing a finite $k$-free set and then specifying a $b$ which is sufficiently large. 

\begin{proposition} \label{prop:kfree_extension}
Let $S \subset [0,M]$ be a $k$-free set.  Then $S$ is $k$-free mod $b$ for all $b > 2M$.
\end{proposition}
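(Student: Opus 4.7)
The plan is to argue by contradiction: suppose that $S$ contains a modular AP of length $k$ witnessed by an ordinary AP $\{a+j\Delta: 0\le j<k\}$ with $b\nmid\Delta$, and whose reductions $s_j := (a+j\Delta) \bmod b$ all lie in $S$. I will show that the $s_j$ themselves form an honest $k$-term arithmetic progression, which contradicts $S$ being $k$-free in the ordinary sense.

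First I would replace $\Delta$ by its symmetric representative $\Delta' \in (-b/2,\,b/2]$ modulo $b$. Because $b\nmid\Delta$, this $\Delta'$ is nonzero, and the reductions $s_j$ are unchanged. Now the sequence $s_0,s_1,\ldots,s_{k-1}$ lies in $S\subset[0,M]$, and the hypothesis $b>2M$ gives $M<b/2$. So consecutive differences satisfy $|s_{j+1}-s_j|\le M<b/2$, while simultaneously $|\Delta'|\le b/2$.

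The key step is then trivial: since $s_{j+1}-s_j\equiv \Delta' \pmod b$, the integer $n_j := (s_{j+1}-s_j-\Delta')/b$ satisfies
\[|n_j|\,b \;\le\; |s_{j+1}-s_j| + |\Delta'| \;<\; \tfrac{b}{2}+\tfrac{b}{2} \;=\; b,\]
where strictness comes from $|s_{j+1}-s_j|<b/2$. Hence $n_j=0$ and $s_{j+1}-s_j=\Delta'$ for every $j$. This shows $s_0,\ldots,s_{k-1}$ is a (possibly decreasing) $k$-term arithmetic progression in $S$ with nonzero common difference $\Delta'$, contradicting the $k$-freeness of $S$.

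There isn't really a hard step here; the only subtlety is the edge case $|\Delta'|=b/2$ when $b$ is even, but that is absorbed by the strict inequality $M<b/2$ coming from the hypothesis $b>2M$. The proof is essentially a one-paragraph pigeonhole/size-comparison argument.
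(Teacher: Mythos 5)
Your proof is correct, and it reaches the contradiction by a genuinely different mechanism than the paper's. The paper keeps $\Delta \in (0,b)$ and runs a carry-tracking argument: writing $c+\Delta j = q_j b + r_j$, it notes that $q_{j+1} \in \{q_j, q_j+1\}$ and splits into cases — if no carry ever occurs the $r_j$ form an increasing AP in $S$; if a carry occurs at every step they form a decreasing AP; and in the mixed case one carry forces $\Delta \geq b-M$ while one non-carry forces $\Delta \leq M$, so $b \leq 2M$, contradicting the hypothesis only at the final step. You instead normalize $\Delta$ to its symmetric representative $\Delta' \in (-b/2, b/2]$ and dispatch all wraparound at once with a single triangle-inequality estimate, $|s_{j+1}-s_j - \Delta'| \leq M + |\Delta'| < b$, so every consecutive difference of residues equals $\Delta'$ exactly and the $s_j$ form an honest AP regardless of the sign of $\Delta'$. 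Your version is shorter and case-free — the increasing/decreasing dichotomy and the carry bookkeeping disappear, with the hypothesis $b > 2M$ entering early as $M < b/2$ — and you correctly flag and absorb the only edge case, $\Delta' = b/2$ for even $b$. What the paper's route buys in exchange is a sharper picture of why $2M$ is the exact threshold: its mixed-carry case isolates the obstruction $b-M \leq \Delta \leq M$, showing precisely which common differences become dangerous when $b \leq 2M$, information your uniform estimate conceals.
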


\begin{proof} For the sake of contradiction, suppose that $S \subset [0,M]$ is $k$-free but that $S$ admits a $k$-term arithmetic progression mod $b$ for some $b> 2M$.

To be precise, suppose that $B = \{c+\Delta j: j \in [1,k]\}$ is equivalent mod $b$ to a subset of $S$ and that $0<\Delta <b$.  For $j \leq k$, let $q_j$ and $r_j$ be the quotient and remainder of $c+\Delta j$ upon division by $b$. Our assumption on $\Delta$ implies that $q_{j+1}$ equals $q_j$ or $q_j+1$.  If $q_{j+1} = q_{j}$ for all $j$, then $\{r_j\}$ is an arithmetic progression in $S$ of length $k$, a contradiction. Thus $q_{j+1} = q_j+1$ for some $j$, hence $\Delta \geq  b-M$ since $r_j \in [0,M]$.

On the other hand, if $q_{j+1} = q_j+1$ for all $j$, then $\{r_j\}$ is a (decreasing) arithmetic progression in $S$ of length $k$, again a contradiction.  Thus $q_{j+1} = q_j$ for some $j$, hence $\Delta \leq M$.  It follows that $b-M \leq \Delta \leq M$, which contradicts that $b> 2M$.
\end{proof}

Sets which are $k$-free mod $b$ can be used to produce $k$-free Kempner sets.  This is made precise in the following.

\begin{theorem} \label{prop:kfree_mod_p_to_kempner}
Fix $b \geq 3$.  If $S \subsetneq [0,b-1]$ is $k$-free mod $b$ and $0 \in S$, then $\mathcal{K}(S,b)$ is $k$-free.
\end{theorem}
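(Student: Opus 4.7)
The plan is to argue by contradiction. Suppose $\mathcal{K}(S,b)$ contains a $k$-term arithmetic progression $a_i = a_1 + (i-1)D$ for $1 \leq i \leq k$, with common difference $D \geq 1$. My goal is to extract from this a $k$-term arithmetic progression mod $b$ lying inside $S$ whose common difference is not divisible by $b$, contradicting the $k$-freeness of $S$ mod $b$.

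The natural move is to read off a single base-$b$ digit position from each $a_i$. First I would set $m$ to be the largest exponent with $b^m \mid D$ and write $D = b^m D'$ with $b \nmid D'$. Writing $a_1 = q\, b^{m+1} + d\, b^m + r$ with $0 \leq r < b^m$ and $0 \leq d < b$, substitution gives
\[
a_i = q\, b^{m+1} + r + b^m \bigl(d + (i-1) D'\bigr).
\]
Since $r$ is independent of $i$, and any carry from position $m$ propagates strictly upward (there is no carry into position $m$ from below, as lower-order digits are fixed across $i$), the $m$-th base-$b$ digit of $a_i$ is $(d + (i-1)D') \bmod b$. By construction these $k$ digits form an arithmetic progression mod $b$ with common difference $D' \bmod b$, which is nonzero by the choice of $m$.

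To conclude I would verify each of these digits lies in $S$. For $i \geq 2$ one has $a_i \geq b^m$, so the $m$-th position is a genuine digit of $a_i \in \mathcal{K}(S,b)$ and therefore lies in $S$; for $i=1$, if $a_1 < b^m$ the $m$-th digit is a leading zero, which lies in $S$ by the hypothesis $0 \in S$. This yields a $k$-term arithmetic progression mod $b$ sitting inside $S$ with common difference $D'$ not divisible by $b$, contradicting the hypothesis. The only delicate point is the digit bookkeeping itself; selecting $m$ as the $b$-adic valuation of $D$ is precisely what ensures no carry pollutes position $m$ and thus that the digit sequence at that position is genuinely an AP mod $b$, so once this observation is in hand the argument is routine.
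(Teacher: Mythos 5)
Your proof is correct, and while it rests on the same core mechanism as the paper's --- isolating a single base-$b$ digit position at which the digits of the progression themselves form a progression mod $b$ --- your execution differs in a genuine way: the paper argues by infinite descent, whereas you unroll the descent into one closed-form computation. The paper's proof splits into cases: if $b \nmid \Delta$, the units digits of the terms already give a $k$-term progression mod $b$ inside $S$; if $b \mid \Delta$, all terms share a units digit $c_0$, and one replaces $A$ by $(A - c_0)/b \subset \mathcal{K}(S,b)$, which has common difference $\Delta/b$, and descends. Your choice of $m$ as the $b$-adic valuation of $D$ jumps directly to the terminal case of that descent: the identity $a_i = q\,b^{m+1} + r + b^m\bigl(d + (i-1)D'\bigr)$ gives $\lfloor a_i/b^m \rfloor \equiv d + (i-1)D' \pmod{b}$ exactly, so the position-$m$ digits form the required progression with $b \nmid D'$; your informal discussion of carries is actually dispensable, since this congruence needs no carry analysis at all. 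A side benefit of your direct route is that it makes the role of the hypothesis $0 \in S$ fully explicit: it is needed precisely when $a_1 < b^m$ and the position-$m$ digit of $a_1$ is a leading zero, and it is genuinely necessary --- for instance, $S = \{1,2\}$ is $3$-free mod $10$, yet $\{1, 11, 21\} \subset \mathcal{K}(\{1,2\},10)$ is a $3$-term progression, exactly because the tens digit of $1$ is a leading zero outside $S$. In the paper's descent this hypothesis is used only tacitly, when $(A-c_0)/b$ produces terms (possibly $0$ itself) whose units digits in later stages are zeros. Your remaining bookkeeping --- that $a_i \geq D \geq b^m$ for $i \geq 2$, so the extracted digit is a genuine digit of an element of $\mathcal{K}(S,b)$ and hence lies in $S$ --- is exactly what the direct argument requires, and it is done correctly.
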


\begin{proof} Suppose that $S$ is $k$-free mod $b$ and that $\mathcal{K}(S,b)$ contains the arithmetic progression $A = \{c+\Delta j \mid j \in [0,k-1]\}$.
If $b\nmid \Delta$, then $A$ and therefore $S$ contains residues in an arithmetic progression mod $b$. Thus $S$ contains a progression mod $b$ of length $k$ or of infinite length, so $S$ is not $k$-free.

Alternatively, suppose that $b \mid \Delta$ and let $c_0$ denote the base-$b$ units digit of $c$.  The arithmetic progression $(A-c_0)/b$ is contained in $\mathcal{K}(S,b)$ and has a smaller common difference.  We conclude by infinite descent.
\end{proof}

\begin{remark} There exist $k$-free Kempner sets $\mathcal{K}(S,b)$ for which $S$ is not $k$-free mod $b$. One simple example is the $3$-free set $\mathcal{K}(\{0,2,5\},7)$.  Examples like this rely on gaps in the digit set $S$ (to avoid `carrying') and do not seem to produce large harmonic sums.
\end{remark}

\section{Harmonic Sums of (Shifted) Kempner Sets}

We now turn our attention to the harmonic sums of Kempner sets. In general, let $\mathcal{H}(S)$ denote the harmonic sum of the integer set $S$.  The following result shows that any lower bound for $M_k$ can be approximated by the harmonic sum of a (shifted) $k$-free Kempner set,

\begin{theorem} \label{thm:kempner_approximation}
Let $S$ be $k$-free, with a convergent harmonic series.  Given $\epsilon >0$, there exists a $k$-free Kempner set $\mathcal{K}$ such that $\mathcal{H}(\mathcal{K}+1) > \mathcal{H}(S)-\epsilon$.
\end{theorem}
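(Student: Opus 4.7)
The plan is to reduce to a finite truncation, shift it so that it becomes a $k$-free digit set containing $0$, and then apply the two earlier results to get a $k$-free Kempner set with the right harmonic sum.

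First, since $\mathcal{H}(S)$ converges, I would choose a finite $S' \subset S$ with $\mathcal{H}(S') > \mathcal{H}(S)-\epsilon$. Let $m = \min(S') \geq 1$ and $M = \max(S')$. The natural candidate for a digit set is $T := S' - m \subset [0, M-m]$: translation preserves $k$-freeness, and one automatically has $0 \in T$ without the risk of introducing a new arithmetic progression. (This is the step where one must be careful — the more obvious choice $T = (S'-1) \cup \{0\}$ can create a $k$-term AP from a $(k-1)$-AP in $S'$, and $S$ need not be $(k-1)$-free.)

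Next, I would invoke Proposition~\ref{prop:kfree_extension} to conclude that $T$ is $k$-free mod $b$ for every $b > 2(M-m)$, and Theorem~\ref{prop:kfree_mod_p_to_kempner} to conclude that $\mathcal{K}(T,b)$ is $k$-free. Shifts preserve $k$-freeness, so $\mathcal{K}(T,b)+1$ is also $k$-free.

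Finally, I would bound the harmonic sum from below by the contribution of the single-digit elements of $\mathcal{K}(T,b)+1$, namely
\[
\mathcal{H}(\mathcal{K}(T,b)+1) \;\geq\; \sum_{t \in T} \frac{1}{t+1} \;=\; 1 + \sum_{\substack{s \in S' \\ s > m}} \frac{1}{s-m+1}.
\]
Since $m \geq 1$ gives $s-m+1 \leq s$ for each $s$, each term satisfies $1/(s-m+1) \geq 1/s$, and $1 \geq 1/m$. Summing these inequalities yields $\sum_{t \in T} 1/(t+1) \geq \mathcal{H}(S') > \mathcal{H}(S) - \epsilon$, which is what is required. The main conceptual obstacle is the choice of how to force $0$ into the digit set without enlarging the forbidden-AP count, and translating by $\min(S')$ resolves this cleanly; after that the argument is essentially bookkeeping.
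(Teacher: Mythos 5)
Your proposal is correct and takes essentially the same route as the paper's own proof: both truncate $S$ to a finite piece of harmonic sum within $\epsilon$, translate by the minimum so the digit set contains $0$, apply Proposition~\ref{prop:kfree_extension} followed by Theorem~\ref{prop:kfree_mod_p_to_kempner} with $b$ large, and bound $\mathcal{H}(\mathcal{K}+1)$ from below via the single-digit copy of the truncation, which sits inside $\mathcal{K}+1$ shifted down by $\min(S')-1 \geq 0$. The only difference is cosmetic: you spell out the termwise comparison $1/(s-m+1) \geq 1/s$ that the paper compresses into the phrase ``shifted by $1-\min(S) \leq 0$.''
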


\begin{proof} Fix $\epsilon > 0$ and choose $M$ such that $\mathcal{H}(S \cap [1,M]) > \mathcal{H}(S)-\epsilon$. Fix an integer $b > \max(k,2M)$, so that $S \cap [1,M]$ and hence $(S \cap [1,M])-\min(S)$ are $k$-free mod $b$ by Proposition~\ref{prop:kfree_extension}.  Then Proposition~\ref{prop:kfree_mod_p_to_kempner} implies that both $\mathcal{K}=\mathcal{K}(S \cap [1,M]-\min(S),b)$ and the shifted set $\mathcal{K}+1$ are $k$-free.

Yet $\mathcal{K}+1$ contains a copy of $S\cap [1,M]$, shifted by $1-\min(S)\leq 0$ (i.e. held constant or decreased), hence $\mathcal{H}(\mathcal{K}+1)\geq \mathcal{H}(S\cap [1,M])>\mathcal{H}(S)-\epsilon$.
\end{proof}

One of the reasons to study Kempner sets is that machinery exists due to \cite{BaillieSchmelzer08} to compute harmonic sums of Kempner sets with great precision.  There is one small difficulty, in that Kempner sets include $0$.  Rather than exclude $0$, we opt to increase our sets termwise by $1$. This shift affects harmonic sum in a way that can be addressed with the following lemma.

\begin{lemma} \label{shift_harmonic}
Let $S$ be a set of positive integers and let $H_n$ denote the $n$th harmonic number.  Then
\[\sum_{s \in S} \frac{1}{s+n} = \sum_{s \in S} \frac{1}{s} + \sum_{s \notin S} \frac{n}{s(s+n)} - H_n.\]
\end{lemma}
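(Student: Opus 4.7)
The plan is to reduce the identity to a standard partial-fraction telescope against the harmonic series. First I would apply the pointwise identity
$$\frac{1}{s+n} = \frac{1}{s} - \frac{n}{s(s+n)},$$
valid for all positive integers $s$ and $n$, to rewrite the left-hand side as
$$\sum_{s \in S}\frac{1}{s+n} = \sum_{s \in S}\frac{1}{s} - \sum_{s \in S}\frac{n}{s(s+n)}.$$
This already explains the first term on the right; what remains is to convert the sum over $s \in S$ in the last expression into a sum over $s \notin S$, absorbing the discrepancy into $H_n$.

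To do so I would invoke the classical telescoping evaluation
$$\sum_{s=1}^{\infty}\frac{n}{s(s+n)} = \sum_{s=1}^{\infty}\left(\frac{1}{s}-\frac{1}{s+n}\right) = H_n,$$
where the last equality follows because the $N$th partial sum equals $H_n - (H_{N+n}-H_N)$ and the bracketed difference tends to $0$. Splitting this sum over all positive integers into its parts indexed by $s \in S$ and $s \notin S$ gives
$$\sum_{s \in S}\frac{n}{s(s+n)} = H_n - \sum_{s \notin S}\frac{n}{s(s+n)},$$
and substituting this back into the partial-fraction identity above produces the claim.

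There is no real obstacle here; each series converges absolutely by comparison with $\sum 1/s^2$, so the rearrangement and splitting are justified regardless of whether $S$ (or its complement in the positive integers) is finite or infinite. The only small point to be careful about is that the convention $s \notin S$ in the statement means a positive integer not in $S$, so that the two subsums on either side of $H_n$ do in fact partition the full telescoping sum.
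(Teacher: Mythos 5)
Your proof is correct and follows essentially the same route as the paper's: rewrite $\frac{1}{s+n}$ via the partial fraction $\frac{1}{s} - \frac{n}{s(s+n)}$ and reduce everything to the telescoping evaluation $\sum_{m=1}^{\infty} \frac{n}{m(m+n)} = H_n$, then split that full sum over $s \in S$ and $s \notin S$. You simply spell out the details (the partial sums $H_n - (H_{N+n}-H_N)$ and the absolute-convergence justification for splitting) that the paper leaves implicit.
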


\begin{proof} Since $\sum_{s \in S} 1/s - \sum_{s \in S} 1/(s+n) = \sum_{s \in S} n/(s^2+ns)$, it suffices after rearranging to show that $H_n = \sum_{m=1}^\infty n/(m^2+nm)$.  To prove this last fact, we write the series on $m$ as a telescoping sum.
\end{proof}

\section{Implementation}

The Baillie--Schmelzer Algorithm described in~\cite{BaillieSchmelzer08} has been fully implemented in the Mathematica language and is freely available from the Wolfram Library Archive~\cite{BaillieSchmelzerWolframLibrary}.  This is useful for fine-tuning pruned results but inefficient for larger searches because the Baillie--Schmelzer Algorithm is somewhat time-intensive.

As a complement to the Mathematica implementation of the Baillie--Schmelzer algorithm, the author wrote a family of search programs in C++.  The core algorithm is a branch-and-bound depth-first search through the subsets of $[0,b-2]$ which are $k$-free mod $b$. More specifically, states are stored as pairs $(S,T)$, in which $S$ is a $k$-free set mod $b$ and $T$ is the set of possible extensions to $S$:
\[T=\{t \in [0,b-2] : t > \max(S) \text{ and } S \cup \{t\} \text{ is } k\text{-free mod } b\}.\]
An upper bound for the branch rooted at $(S,T)$ is then $\mathcal{H}(\mathcal{K}(S \cup T,b)+1)$.

To efficiently score a candidate set $S$, one may estimate the full harmonic sum $\mathcal{H}(\mathcal{K}(S,b)+1)$ as a function of the single-digit sum $h_1 := \sum_{s \in S} 1/(s+1)$. If one approximates the contribution to $\mathcal{H}(\mathcal{K}(S,b)+1)$ from integers in $[b^n, b^{n+1}]$ as $h_1 \cdot (\#S/b)^n$, one obtains the heuristic harmonic sum
\[
  \mathcal{H}(\mathcal{K}(S,b)+1) \approx \frac{h_1}{1-\#S/b}.
\]
While this approximation is too coarse in practice, the two-digit variant based on $h_2 := \sum_{s,t \in S} 1/(sb+t+1)$ appears tolerable. In practice, we use a refined approximation of the form
\begin{align}\label{eq:kempner_sum_approximation}
  \mathcal{H}(\mathcal{K}(S,b)+1) \approx
  \frac{(1+\alpha \cdot (\# S /b)^{\beta}) h_2 - \gamma \cdot (\# S /b)^2}{1-(\# S /b)^2},
\end{align}
in which $\alpha, \beta, \gamma$ are best-fit parameters (depending on $b$) computed in advance by sampling $10000$ choices for $S \subset [0,b-1]$, drawn from a distribution in which different values of $\# S$ are equiprobable. To give a sense of scale, we have $\alpha = 0.0852$, $\beta = 2.1534$, and $\gamma =  1.4085$ when $b=100$.

Branches $(S,T)$ whose approximate upper bounds (via~\eqref{eq:kempner_sum_approximation}) lie below a threshold are pruned, and surviving sets are recorded for further processing using the full Baillie--Schmelzer algorithm (and Lemma~\ref{shift_harmonic}) in Mathematica.
Code for searching and post-processing these sets, as well as log files from recorded searches, is available as a GitHub repository~\cite{CodeRepo}.

\section{\texorpdfstring{$3$}{3}-Free Kempner Sets of Large Harmonic Sum}

The author performed a branch-and-bound search over $3$-free sets mod $b$ for each $b \leq 158$ in an attempt to find Kempner sets which outperformed the greedy set $G_3 = \mathcal{K}(\{0,1\},3) + 1$. This search was unsuccessful, indicating that $G_3$ is an influential local maximum and that larger search spaces will be needed to improve lower bounds on $M_3$.

To extend these results to larger $b$, at least conditionally, the author ran searches with additional heuristic pruning. Two methods were considered:
\begin{enumerate}
  \item[P1.] One asserts an upper bound on $\# S$ and reruns any search in which the upper bound was realized.
  \item[P2.] One restricts to $3$-free sets which deviate from a greedy construction for $3$-free sets mod $b$ a bounded number of times.
\end{enumerate}
In addition to the unconditional search over $b \leq 158$, the author ran searches under (P1) for $b \in [159, 316]$ and under (P1) and (P2) for $b \in [317,400]$. These searches, which ran for a total of $8679$ core-hours (at 2.3 GHz), were unsuccessful in improving $G_3$. The largest harmonic sums found associated to each base $b \leq 400$ are depicted in Figure~\ref{fig:harmonic3}.

\begin{figure}[ht]
  \includegraphics[width=\textwidth]{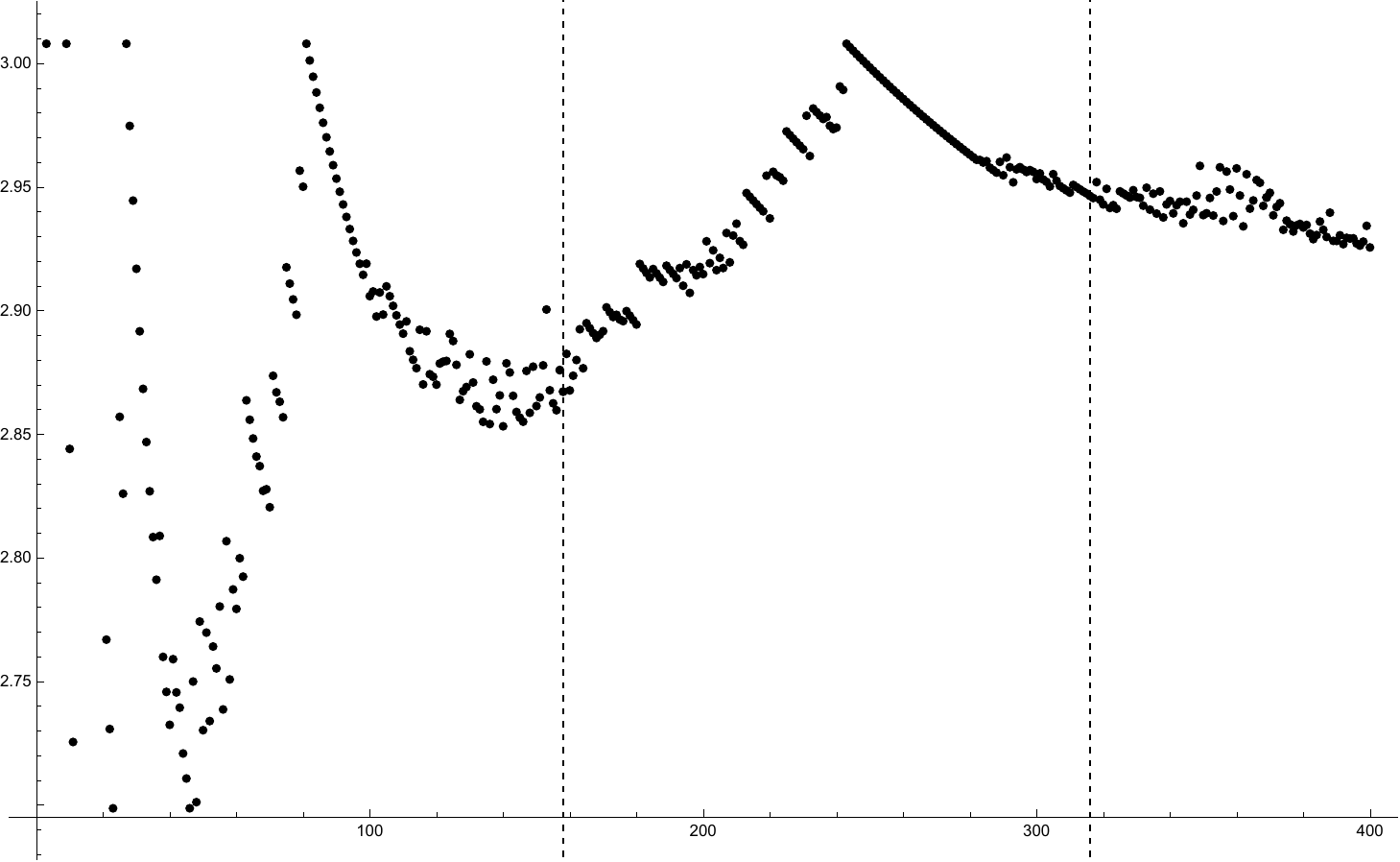}
  \caption{\footnotesize Largest harmonic sums found in bases $b \leq 400$. Maxima at $b \in \{3,9,27,81,243\}$ correspond to $G_3$. Results for $b \geq 370$ are likely non-optimal.}
  \label{fig:harmonic3}
\end{figure}

\begin{remark}
The behavior described here for $k=3$ is typical of prime $k$: the greedy set $G_k=\mathcal{K}([0,k-2],k)+1$ dominates early results and we find little else of interest. When $k$ is composite, $G_k$ is non-Kempner and early results are dominated by $G_p$, where $p$ is the largest prime less than $k$. This inefficiency is most pronounced when $k$ is one less than a prime.
\end{remark}

\section{Four-Free Kempner Sets of Large Harmonic Sum}

We next consider $k$-free sets in the case $k=4$. Heuristics from~\cite{GerverRamsey79} suggest the lower bound $M_4 > 4.3$, as derived from the greedy set $G_4$.  (No algorithm is known for computing $\mathcal{H}(G_4)$ with precision; a lower bound using the first $100000$ terms of $G_4$ gives $\mathcal{H}(G_4) > 4.25146$.) 

A series of branch-and-bound searches over all $4$-free sets mod $b \leq 88$ yields $5$ shifted Kempner sets with harmonic sum above $4.35$. Our best result employs a $21$-term $4$-free set mod $55$ and shows $M_4 \geq 4.43975$, which improves the heuristic record set by~\cite{GerverRamsey79}.

Under the pruning heuristic (P1), we extend these results to $b \leq 118$, finding two additional sets with harmonic sum over $4.35$. Finally, under (P1) and (P2), we consider $b \leq 200$ and produce $390$ additional sets above the threshold $4.35$. Of these, $290$ have $b=121$, mostly as deleterious modifications of $\mathcal{K}(\{0,1,2,4,5,7\}, 11)$. In aggregate, these searches ran for $5603$ core-hours (at $2.3$ GHz). The ten sets of largest harmonic sum (excluding results from $b=121$) found are compiled in Table~\ref{tab:4_free}.

\begin{center}
\begin{longtable}{c|c|l}
\caption*{\textsc{\tablename}\ \thetable. $4$-free Kempner Sets with Large Harmonic Sum}\\
$\mathcal{H}(\mathcal{K}+1)$ & $b$ & $S$ \\
\hline
\endfirsthead
\multicolumn{3}{c}%
{\textsc{\tablename}\ \thetable. \textit{Continued from previous page}}\vspace{1 mm} \\
$\mathcal{H}(\mathcal{K}+1)$ & $b$ & $S$ \\\hline
\endhead
\hline \multicolumn{3}{r}{\textit{Continued on next page}} \\
\endfoot
\hline
\endlastfoot
4.43975 & 55 & \footnotesize \{0,1,2,4,5,9,10,11,14,16,17,18,21,24,30,37,39,41,42,45,47\} \\
4.42175 & 11 & \footnotesize \{0,1,2,4,5,7\} \\
4.41989 & 22 & \footnotesize \{0,1,2,4,5,7,8,9,14,17\} \\
4.39620 & 191 & \footnotesize\{0,1,2,4,5,7,8,9,14,16,17,18,26,30,31,32,36,37,39,40,42,50,\\*
& & \footnotesize \qquad 51,55,56,58,59,62,64,67,69,70,77,81,83,87,91,94,102,\\*
& & \footnotesize \qquad 109,110,111,113,117,119,120,122,123,125,127\} \\
4.37859 & 177 & \footnotesize\{0,1,2,4,5,7,8,9,14,16,17,18,22,29,30,31,34,35,37,39,42,45,\\*
& & \footnotesize \qquad 47,49,57,58,61,63,65,66,70,71,72,78,80,81,82,89,96,\\*
& & \footnotesize \qquad 100,102,108,110,116,136,149\} \\
4.37699 & 193 & \footnotesize\{0,1,2,4,5,7,8,9,14,16,17,18,22,29,30,31,34,35,37,39,42,45,\\*
& & \footnotesize \qquad 47,49,57,58,60,61,64,65,66,70,71,72,74,92,96,100,102,\\*
& & \footnotesize \qquad 106,110,113,116,117,118,122,124,125,157\} \\
4.37665 & 157 & \footnotesize\{0,1,2,4,5,7,8,9,14,16,17,18,22,28,29,30,32,35,36,37,39,45,\\*
& & \footnotesize \qquad 57,59,61,62,67,68,69,71,75,76,78,80,84,95,104,108,115,\\*
& & \footnotesize \qquad 119,137,142,146\} \\
4.37583 & 97 & \footnotesize\{0,1,2,4,5,7,8,17,18,20,21,23,24,25,30,32,37,45,48,54,56,58,\\*
& & \footnotesize \qquad 59,61,63,64,66,68,74,77,85,90,92\} \\
4.37486 & 193 & \footnotesize\{0,1,2,4,5,7,8,9,14,16,17,18,22,29,30,31,34,35,37,39,42,45,\\*
& & \footnotesize \qquad 47,49,57,58,60,61,64,65,66,70,71,72,74,92,96,100,102,\\*
& & \footnotesize \qquad 106,113,116,117,118,122,124,125,128,157\} \\
4.37406 & 105 & \footnotesize\{0,1,2,4,5,7,8,9,15,16,18,19,20,25,26,28,29,31,32,33,36,45,\\*
& & \footnotesize \qquad 50,51,59,61,63,68,70,72,79\}
\label{tab:4_free}
\end{longtable}
\end{center}

\section{Ten-Free Kempner Sets of Large Harmonic Sum}

The author ran searches for $k$-free sets with $k=6$ and $k=10$ as well. For $k=6$, we ran an unconstrained search for $b \leq 60$, extended to $b \leq 74$ under (P1) and $b \leq 125$ under (P1) and (P2). In total, these searches ran for $4199$ core-hours (at $2.3$ GHz) but were unable to improve the lower bound $M_6 \geq \mathcal{H}(\{1\} \cup (G_5 + 1)) = 7.94433$.

For $k=10$, we ran an unconstrained search for $b \leq 58$ as well as a search for $b \leq 100$ under (P1) and (P2). This search, which ran for $1607$ core-hours (at $2.3$ GHz) found $66$ sets with harmonic sum at least $13.5$. Our best result builds on the $55$-term $10$-free set 
\begin{align*}
  S = \{&0,1,2,3,4,5,6,7,8,10,11,12,13,14,15,17,18,19,20,21,\\
  &22,24,25,26,27,28,29,31,32,33,34,35,36,38,39,40,42,\\
  &43,45,46,47,48,49,50,52,53,55,56,60,61,62,68,69,71,73\}
\end{align*}
with base $b=77$ and establishes $M_{10} \geq 14.056$, which dramatically improves the simple lower bound $\mathcal{H}(\{1,2,3\} \cup (G_7 + 1)) = 13.5905$.

\begin{remark}
Fix an integer $m \geq 1$ and an integer set $A \subset [1,m]$. Since $G_7$ is $7$-free, any $10$-term arithmetic progression in $A \cup (G_7 + m)$ must include at least four elements of $A$. We therefore obtain an efficient test for checking whether such sets are $10$-free. While this method can produce some interesting results, such as
\[
  \mathcal{H}(
    \{1,2,3,4,5,6,7,8,9,11,12,13,14,15,16\} \cup (G_7+17)
  )
  = 13.6962,
\]
it seems unlikely to improve the lower bound $M_{10} \geq 14.056$.
\end{remark}

\vspace{5 mm}
\bibliographystyle{alpha}
\bibliography{compiled_bibliography}

\end{document}